\newcommand{\comm}[1]{}
\def\ti{\tilde}
\def\({\left(}
\def\){\right)}
\def\oli{\overline}
\def\raw{\rightarrow}
\def\no={\neq}
\def\sm{\setminus}
\def\C{{\mathbb C}}
\def\BB{{\mathcal B}}
\def\NN{{\mathcal N}}
\def\RR{{\mathcal R}}
\def\de{\delta}
\def\vep{\varepsilon}
\def\la{\lambda}
\def\om{\omega}
\def\La{\Lambda}
\theoremstyle{plain}
\newtheorem{Main}{Theorem}
\newtheorem{Thm}{Theorem}[section]
\newtheorem{Lem}[Thm]{Lemma}
\theoremstyle{remark}
\newtheorem{Def}[Thm]{Definition}
\begin{document}

\title{Semi-hyperbolic maps are rare}
\author{Magnus Aspenberg}

\begin{abstract}
We prove in this paper that the set of semi-hyperbolic rational maps has Lebesgue measure zero in the space of rational maps of the Riemann sphere of a fixed degree $d \geq 2$, generalising \cite{AG}.
\end{abstract}

\maketitle

\section{Introduction}

In this paper we study critically non-recurrent rational maps on the Riemann sphere. One class of such maps is the semi-hyperbolic maps, which were introduced by Carleson, Jones and Yoccoz in \cite{JJ}. In this paper they proved, among other things, that every Fatou component for a semi-hyperbolic polynomial is a John domain. Let $J(f)$ and $F(f)$ be the Julia set and Fatou set respectively for $f$ and let $Crit(f)$ be the set of critical points of $f$ on the Riemann sphere (we use the spherical metric, so that for instance all polynomials have a critical point at infinity). Let $\om(c)$ be the omega-limit set of $c$, i.e. the set of limit points of the forward orbit of $c$.
\begin{Def}
Suppose $f$ is a non-hyperbolic rational map without parabolic periodic points. Then we say that 
\begin{itemize}
\item $f$ is a {\em semi-hyperbolic map} if $c \notin \om(c)$, for all $c \in Crit(f) \cap J(f)$. 
\item $f$ is a {\em Misiurewicz map} if $\om(c) \cap Crit(f) = \emptyset$ for all $c \in Crit(f) \cap J(f)$.
\end{itemize}
\end{Def}
Hence the Misiurewicz maps have the stronger type of condition that the set of critical points on the Julia set is non-recurrent instead of just every single critical point on the Julia set. This means that the forward orbit of critical points cannot accumulate on other critical points, which could be the case for semi-hyperbolic maps.

Misiurewicz maps are special kind of Collet-Eckmann maps, which are maps for which every critical point $c$ in the Julia set satisfies the so called {\em Collet-Eckmann condition}, namely that there exist $\la > 1$ and $C > 0$ such that $|(f^n)'fc)| \geq C\la^n$ for all $n \geq 0$, whenever there is no critical point in the forward orbit of $c$. 
Semi-hyperbolic maps do not have this property necessarily, although they have some expanding properties. By \cite{GS}, Fatou components of rational Collet-Eckmann maps have some geometric regularity (they are H\"older domains). In \cite{NM} and later \cite{NM-JJ}, N. Mihalache extended \cite{GS} and \cite{JJ} and showed that Fatou components for rational maps satisfying the so called recurrent Collet-Eckmann condition are John domains. (The recurrent CE-condition means that every critical point on the Julia set either satisfies the Collet-Eckmann condition or is non-recurrent). Hence, as a special case, Fatou components for rational semi-hyperbolic maps also possess this geometric regularity.

If we turn to the measure theoretic picture, non-recurrent dynamics seems to be a rare event in the parameter space. 
In the real quadratic family, see \cite{DS}, Sands proved a conjecture by Misiurewicz that the set of Misiurewicz maps has Lebesgue measure zero, and in \cite{SZ}, Zakeri proved that the set of Misiurewicz maps has full Hausdorff dimension, i.e. equal to $1$. For uni-critical complex polynomials, similar results were obtained by J. Rivera-Letelier \cite{RL2}. In \cite{MA3} it was proven that Misiurewicz maps have measure zero in the parameter space of rational maps of any fixed degree $d \geq 2$ and in \cite{AG} that they have full Hausdorff dimension,
i.e. equal to the dimension of the parameters space. For the exponential family $f_{\la}(z) = \la e^z$, $\la \in \C$ similar results also hold, see \cite{Agnieszka} and \cite{Dobbs1-CMP}. We prove in this paper the following.

\begin{Main} \label{semirare}
The set of semi-hyperbolic maps has Lebesgue measure zero in the space $\RR^d$ of rational functions for any fixed degree $d \geq 2$.
\end{Main}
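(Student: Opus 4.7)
The plan is to adapt the transversality argument of \cite{MA3}, \cite{AG} for Misiurewicz maps to the semi-hyperbolic setting, and the main new difficulty is that critical orbits may accumulate on other critical points, so uniform expansion along a critical orbit is not available. First I would reduce to a countable exhaustion: by $c \notin \om(c)$ and compactness, for each semi-hyperbolic $f$ and each $c \in Crit(f) \cap J(f)$ there exist $\delta > 0$ and $N \in \N$ with $\dist(f^n(c),c) \geq \delta$ for all $n \geq N$. Let $SH_{\delta,N} \sbs \RR^d$ be the set of semi-hyperbolic maps for which this estimate holds for every $c \in Crit(f) \cap J(f)$; the semi-hyperbolic set equals $\bigcup_{\delta,N} SH_{\delta,N}$, so it suffices to show each $SH_{\delta,N}$ has Lebesgue measure zero.

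For a parameter $f_0 \in SH_{\delta,N}$ I would invoke the backward contraction theorem of Carleson--Jones--Yoccoz: there exist $r > 0$ and $\theta \in (0,1)$ such that every connected component of $f_0^{-n}(D(z,r))$, $z \in J(f_0)$, has diameter at most $C \theta^n$. Combined with Koebe distortion for univalent pull-backs and the holomorphic motion $c_i(\la)$ of the critical points in a parameter neighborhood of $f_0$, this controls the derivative in $\la$ of the map $\la \mapsto f_\la^n(c_i(\la))$. I would then aim for a Borel--Cantelli type estimate bounding the measure of the parameters $\la$ near $f_0$ for which $\dist(f_\la^n(c_i(\la)),c_i(\la)) < r_n$ by a summable sequence; this would force the subset of $SH_{\delta,N}$ near $f_0$ to be null.

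The technical heart, and the principal departure from the Misiurewicz argument, is propagating this expansion along critical orbits that pass arbitrarily close to \emph{other} critical points. I would order $Crit(f) \cap J(f)$ by $c_j \prec c_i \Lraw c_j \in \om(c_i)$; this is a strict partial order precisely because $c \notin \om(c)$ in the semi-hyperbolic case, so it is acyclic, and I would induct on its depth. The base case consists of $\prec$-minimal critical points whose $\om$-limits contain no critical points at all, and these are of Misiurewicz type, so the estimates of \cite{AG} apply directly. For the inductive step at $c_i$, I would use the Carleson--Jones--Yoccoz telescope around a lower critical point $c_j \in \om(c_i)$ to replace the missing uniform expansion by controlled distortion along those subsequences of the orbit of $c_i$ that approach $c_j$, relying on the inductively-established measure-zero conclusion for $c_j$ to discard the troublesome parameter set. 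Uniformly bounding the cumulative distortion as pull-backs traverse the $\prec$-hierarchy, and converting this dynamical estimate into a parameter-space transversality statement, is the step where I expect the main obstacle to lie.
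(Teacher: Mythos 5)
Your structural observation---that the relation $c_j \prec c_i \Leftrightarrow c_j \in \om(c_i)$ is acyclic because $c \notin \om(c)$, so one can single out minimal critical points whose $\om$-limit sets contain no critical points at all---is exactly the paper's starting point, and your countable exhaustion into sets $SH_{\de,N}$ matches the paper's $S_{\de,k}$. But there are two genuine gaps. First, your probabilistic step points in the wrong direction: if the measure of the parameters $\la$ with $\dist(f_\la^n(c_i(\la)),c_i(\la)) < r_n$ were summable, Borel--Cantelli would give that almost every nearby parameter has \emph{no} close returns after some time, i.e.\ that quantitatively non-recurrent behaviour is typical --- the opposite of what you must prove. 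To show $SH_{\de,N}$ is null you need the reverse statement: that a definite proportion of parameters in every small disk \emph{do} exhibit a close return (or otherwise leave $SH_{\de,N}$), so that the density of $SH_{\de,N}$ is bounded away from $1$ at every point. That is how the paper argues: along the hyperbolic set $\La = P(f,c_p)$ of the minimal critical point (hyperbolic by Ma\~n\'e's theorem, not by Carleson--Jones--Yoccoz backward contraction), a distortion lemma on Whitney disks shows $\xi_{n,p}$ reaches a fixed large scale with small distortion; then either the image disk lies in the Fatou set, which captures $c_p$ and reduces the number of Julia critical points, or it meets the Julia set, hence covers the sphere in bounded time and in particular covers $B(c_p,\de)$, forcing a fixed proportion of parameters to have a return of $\xi_{n+m,p}(a)$ into $B(c_p,\de)$, which is incompatible with $(\de,n+m-1)$-semi-hyperbolicity except on the null set of super-attracting parameters.

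Second, the step you yourself flag as the main obstacle---propagating expansion/distortion along orbits of non-minimal critical points that approach lower ones in the $\prec$-hierarchy, via a Carleson--Jones--Yoccoz telescope---is not carried out, and the paper shows it is unnecessary. The paper never analyses the non-minimal critical points dynamically: it inducts on the \emph{number} $q$ of critical points on the Julia set, perturbs only the minimal critical point $c_p$, and in the Fatou-capture case simply invokes the inductive hypothesis for maps with at most $q-1$ Julia critical points (the base case, one critical point on $J$, being the Misiurewicz result of \cite{MA3}). Your plan also leaves the transversality question unaddressed: one must know that the function $x_p(a) = f_a(c_p(a)) - h_a(f_0(c_p(0)))$ is not identically zero, otherwise the parameter map $\xi_{n,p}$ never reaches large scale; the paper proves this with Levin's theorem (the limit $\lim_n \xi_{n,p}'(0)/(f^n)'(fc_p) \neq 0$ combined with $|(f^n)'(fc_p)| \to \infty$ on the hyperbolic set) together with the absence of invariant line fields for non-Latt\'es semi-hyperbolic maps. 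Without the density-bounded-away-from-one mechanism, the transversality input, and the reduction to the minimal critical point, the proposal as written does not close.
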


We note here that Theorem \ref{semirare} was shown to be true for $d=2$ already in \cite{AG}, so this paper generalises \cite{AG} to all degrees. 

The space $\RR^d$ of rational maps of degree $d \geq 2$ is identified with the coefficient space, where the coefficients are $a_j, b_j \in \C$ and the rational function is
\[
f(z) = \frac{a_0 + a_1z + \ldots + a_d z^d}{b_0 + b_1z + \ldots + b_d z^d},
\]
and where at least one of $a_d$ and $b_d$ is non-zero. Moreover, we require that there are no common zeros of the denominator and the numerator. This turns the space $\RR^d$ into a ($2d+1$)-complex dimensional manifold with two charts (corresponding to $a_d \neq 0$ and $b_d \neq 0$).

\section{Proof of Theorem \ref{semirare}}

We will first make an equivalent definition of semi-hyperbolic maps following Definition 0.2 in \cite{MA3}. We first let $S^d$ be the set of semi-hyperbolic maps of degree $d$. Let
\[
P^k(f,c) = \oli{\bigcup_{n > k} f^n(c)}.
\]
Let $SupCrit(f)$ the set of critical points of $f$ which lie in a super-attracting cycle.

\begin{Def}
Let $\ti{S}^d$ be the set of non-hyperbolic rational maps of degree $d$ without parabolic cycles and such that $c \notin \om(c)$ for all critical points not laying in super-attracting cycles. We put further
\begin{align}
S_{\de,k} &= \{ f \in \ti{S}^d: P^k(f,c) \cap B(c,\de)  = \emptyset \text{ for all $c \in Crit(f) \sm SupCrit(f)$ }\} \nonumber \\
S_{\de} &= \cup_{k \geq 0} S_{\de,k}. \nonumber
\end{align}
\end{Def}

It is easy to see that $\ti{S}^d = S^d$. We note further that every semi-hyperbolic map $f$ has some $\de > 0$ and $k \geq 0$ such that $f \in S_{\de,k}$.

To avoid some unnecessary work we start by reducing the space of rational maps and only consider maps which are not conjugate to each other by a M\"obious transformation. Hence we consider $\ti{\RR}^d = \RR^d /\sim$ where $\sim$ is the equivalence relation defined by $f \sim g$ if and only if $f = M^{-1}\circ g \circ M$ for some M\"obious transformation $M$. The new space $\ti{\RR}^d$ has dimension $2d-2$, which is equal to the number of critical points, including multiplicity. 

Denote by $c_1, \ldots, c_q$ the critical points on the Julia set for a semi-hyperbolic map $f$. Then they are all non-recurrent. Suppose we have a small analytic family $f_a$, $|a| < \vep$ of maps around $f=f_0$ where $a \in \C^{2d-2}$. By for instance \cite{McM-book} the functions $\xi_{n,j}(a) = f_a^n(c_j(a))$ are all normal families for each fixed $j$ in a small disk around $a=0$ if and only if the Julia set $J(f)$ is $J$-stable, i.e. moves holomorphically.

Since every $c_j$ is non-recurrent the omega limit set $\om(c_j)$ has the property $c_j \notin \om(c_j)$. Moreover, we
have the following evident transitive property, that if the forward orbit of $c_1$ accumulates on $c_2$ then it also accumulates on $\om(c_2)$. Hence if  $c_i \in \om(c_j)$ then we cannot have $c_j \in \om(c_i)$.

Hence the critical points $c_1, \ldots, c_q$ are vertices in a directed graph, where each directed edge $c_ic_j$ from $c_i$ to $c_j$ corresponds to the fact that $c_i$ accumulates on $c_j$. Since $c_i \in \om(c_j)$ implies $c_j \notin \om(c_i)$ there cannot be any loops in this graph. Hence there has to be at least one element, call it $c_p$, which has the property that
\[
c \notin \om(c_p), \qquad \text{for all critical points $c$}.
\]

The closure of the forward orbit of $c_p$,
\[
P(f,c_p) = \oli{\bigcup_{n > 0} f^n(c_p)},
\]
is then a forward invariant, compact subset of $J(f)$ and does not contain any critical or (by definition) parabolic periodic points. By a Theorem of Mane \cite{RM}, $P(f,c_p)$ has to be a uniformly expanding (so called hyperbolic) set. This means that there exists some $N > 0$ and $\la > 1$ such that
\[
|f(^n)'(z)| \geq \la \qquad \text{ for every $z \in P(f,c_p)$}.
\]

It is well known that there is a holomorphic motion of this set. Put $\La = P(f,c_p)$. There is a function $h: \La \times \BB(0,\vep) \raw \hat{\C}$ such that
\[
f_a \circ h_a(z) = h_a \circ f_0(z),
\]
where $h_a(z) = h(z,a)$ and $\BB(0,\vep)$ is a $2d-2$-dimensional ball in the parameter space $\ti{\RR}_d$ centered at $f=f_0$. Moreover, $h$ is analytic in $a$ and quasi-conformal with bounded dilatation in $z$, for fixed $a \in \BB(0,\vep)$ for some $\vep > 0$.
Suppose for simplicity that $c_p$ is simple, so that it moves holomorphically also (and do not split into many critical points). Higher order critical points are degenerate in the parameter space and rational functions having such critical points take up only a set of Lebesgue measure zero, so we may well assume that all critical points are simple.

Moreover, for technical reasons, we may assume that the set $\La$ and does not contain $\infty$ and that $\infty$ is not a critical point. Since $\La \cup Crit(f) \neq \hat{\C}$ this can be done by pre- and post-composing $f$ with a suitable M\"obious transformation. Assuming this is done, we may use the spherical metric and standard metric interchangeably on $\La$ since they are equivalent on $\La$.

Introduce the parameter functions
\[
x_p(a) = f_a(c_p(a)) - h_a(f_0(c_p(0)).
\]
Then $x_p(a)$ is analytic in $\BB(0,\vep)$. We want to prove that $x_p(a)$ is not identically equal to zero.

\begin{Lem}
The function $x_p(a)$ is not identically equal to zero in $\BB(0,\vep)$.
\end{Lem}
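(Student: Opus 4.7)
\medskip

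The plan is to argue by contradiction. Suppose $x_p(a)\equiv 0$ on $\mathbb{B}(0,\vep)$. Then $f_a(c_p(a))=h_a(f_0(c_p(0)))$ for every parameter, and since $h_a$ conjugates $f_a$ to $f_0$ on $\La$, one may apply $f_a^{n-1}$ to both sides and iterate to obtain
\[
f_a^n(c_p(a)) \;=\; h_a(f_0^n(c_p(0))) \qquad \text{for all } n\geq 1,
\]
so the closure of the forward orbit of $c_p(a)$ is exactly the holomorphically moved set $h_a(\La)$. Now differentiate $x_p$ at $a=0$ along an arbitrary tangent direction $g\in T_{f_0}\ti{\RR}^d$ (viewed concretely as a rational perturbation of $f_0$). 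Writing $a(t)=tg$, the critical-point contribution vanishes because $c_p$ is simple and $f_0'(c_p(0))=0$, so
\[
\partial_g x_p(0) \;=\; g(c_p(0)) \;-\; \dot h_g(f_0(c_p(0))),
\]
where $\dot h_g(z):=\partial_t h_{tg}(z)|_{t=0}$ is the first-order motion and, from $f_{tg}\circ h_{tg}=h_{tg}\circ f_0$, satisfies the cohomological equation $\dot h_g(f_0(z))-f_0'(z)\dot h_g(z)=g(z)$ on $\La$.

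The assumption $x_p\equiv 0$ forces $\partial_g x_p(0)=0$ for every $g$, i.e.\ the identity $g(c_p(0))=\dot h_g(f_0(c_p(0)))$ must hold identically on the tangent space. The crucial structural fact is that the two sides depend on $g$ through very different data: the left-hand side is evaluation at $c_p(0)$, while, since both $z$ and $f_0(z)$ lie in $\La$ in the cohomological equation, the right-hand side depends only on $g|_\La$. Because $c_p\notin\om(c_p)$ implies $c_p(0)\notin\La$, and $\La$ is compact, the point $c_p(0)$ sits at positive distance from $\La$. One now wants to exploit the $(2d-2)$-dimensional freedom in $\ti{\RR}^d$ to produce a direction $g$ with $g(c_p(0))$ normalised to $1$ and $\dot h_g(f_0(c_p(0)))$ arbitrarily small, which will contradict the identity.

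The main obstacle is making this independence argument rigorous inside the \emph{finite-dimensional} algebraic class of rational perturbations: one cannot simply take $g$ supported near $c_p(0)$. The natural remedy is to construct $g$ via a quasiconformal deformation, integrating a Beltrami coefficient supported in a small disk around $c_p(0)$ that is disjoint from $\La$; such a deformation leaves the dynamics on $\La$ unchanged to first order, so $\dot h_g\equiv 0$ on $\La$ while the critical value $f_a(c_p(a))$ genuinely moves. Combined with the uniform solvability of the cohomological equation on the hyperbolic set $\La$ (which guarantees $\dot h_g$ depends continuously on $g|_\La$), this yields a tangent vector with $\partial_g x_p(0)\neq 0$, contradicting $x_p\equiv 0$ and completing the proof.
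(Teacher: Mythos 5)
Your reduction of the problem to finding a tangent direction $g$ with $g(c_p(0))\neq 0$ but $\dot h_g$ negligible on $\La$ is a reasonable reformulation, but the step that is supposed to produce such a $g$ --- ``construct $g$ via a quasiconformal deformation, integrating a Beltrami coefficient supported in a small disk around $c_p(0)$ that is disjoint from $\La$'' --- does not work, and this is precisely where the real difficulty (transversality) lives. First, to stay inside the finite-dimensional family $\RR^d$ (or $\ti{\RR}^d$) the Beltrami coefficient must be $f_0$-invariant; a coefficient supported only in a small disk around $c_p(0)$ is not invariant, and integrating it does not yield a deformation of $f_0$ through rational maps. Making it invariant means spreading it over the grand orbit of the disk, and since $c_p(0)$ lies in $J(f_0)$ (it is a critical point \emph{on the Julia set}, even though $c_p(0)\notin\La$), forward images of any neighbourhood of $c_p(0)$ eventually cover the sphere, so the invariant support cannot avoid $\La$. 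Second, and more fundamentally, any deformation obtained from an invariant Beltrami form is of the shape $f_a=\phi_a\circ f_0\circ\phi_a^{-1}$ with $\phi_a$ a global quasiconformal conjugacy; then $c_p(a)=\phi_a(c_p(0))$, $f_a(c_p(a))=\phi_a(f_0(c_p(0)))$, and the natural motion of $\La$ is $\phi_a|_\La$, so the critical value and the hyperbolic set move \emph{coherently} and such directions give $\partial_g x_p(0)=0$ --- exactly the opposite of what you need. Finally, invariant Beltrami coefficients supported near the Julia set are trivial here because the Julia set of a semi-hyperbolic map (not equal to the sphere) has measure zero, so there is no ``hidden'' deformation freedom near $c_p(0)$ either. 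In short, quasiconformal surgery cannot decouple the motion of the critical value from the motion of $\La$; proving that some rational direction does decouple them is the hard analytic content of the lemma, and your argument assumes it rather than proves it.

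For comparison, the paper's proof goes through a different mechanism: if $x_p\equiv 0$, then (as in your first display) $\xi_{n,p}(a)=h_a(f_0^n(c_p(0)))$, which forces $\xi_{n,p}$ to be a normal family on $\BB(0,\vep)$; on the other hand, Levin's transversality theorem (Theorem~1 of \cite{Levin-book}), applicable because the expansion on $\La$ gives the summability condition $\sum_j |(f^j)'(fc_p)|^{-1}<\infty$, produces a one-dimensional slice on which $\xi_{n,p}'(0)/(f^n)'(fc_p)$ converges to a nonzero limit; since $|(f^n)'(fc_p)|\raw\infty$, the derivatives $\xi_{n,p}'(0)$ blow up, contradicting normality. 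The non-degeneracy you are trying to manufacture by hand is thus imported from Levin's theorem, which is a genuinely nontrivial input; if you want an argument along your infinitesimal lines, you would need to prove an analogous transversality statement for rational directions, not a qc-supported one.
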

\begin{proof}
Suppose the contrary, i.e. that $x_p(a)$ is identically equal to zero. Then clearly, $\xi_{n,p}$ is a normal family in $\BB(0,\vep)$. 

If all $\xi_{n,j}(a)$, $j=1, \ldots, q$ are normal then the $J(f)$ moves holomorphically and we have a contradiction, since semi-hyperbolic maps cannot carry an invariant line field on its Julia set unless $f$ is a Latt\'es map, by Lemma 2.7 in \cite{AG}, and the fact that the Julia set of a semi-hyperbolic map has measure zero if it is not the whole sphere. See also Lemma 2.3 \cite{MA3} (this lemma applies to semi-hyperbolic maps as well as Misiurewicz maps) or \cite{Levin-book}.

Now we use a result by G. Levin \cite{Levin-book} Theorem 1. We formulate a direct consequence of this Theorem here. Suppose that given a critical point $c$ is weakly expanding, namely satisfying the condition,
\[
\sum_{j=0}^{\infty} \frac{1}{|(f^j)'(f(c))|} < \infty.
\]
Then there is a $1$-dimensional complex manifold $M$ going through $f=f_0$, such that when a small neighbourhood of $f$ in $M$ is parameterised non-degenerately, and $f_t \in M$ for any $t$ inside a $1$-dimensional disk $B(0,\vep) \subset \BB(0,\vep)$, then we have that the following limit exists and is non-zero:
\[
\lim\limits_{n \raw \infty} \frac{\xi_{n,p}'(0)}{(f^n)'(fc)} = L \neq 0.
\]
We note here that in \cite{Levin-book} standard derivatives are used. But since we assume that $\La$ is a bounded set in the plane the spherical and standard derivatives are comparable. 

We apply this result to $c=c_p$ and see directly that since $\La$ is uniformly expanding, 
we evidently have $|(f^n)'(fc)| \raw \infty$. Hence also $\xi_{n,p}'(0) \raw 0$.
But this means that $\xi_{n,p}$ cannot be a normal family in $B(0,\vep)$ and hence not in $\BB(0,\vep)$. Hence we have a contradiction and $x_p$ is not identically equal to zero.
\end{proof}

Since $x_p(a)$ is not identically equal to zero we conclude that, since $x_p$ is analytic in several variables, for almost all complex $1$-dimensional disks $B(0,\vep) \subset \BB(0,\vep)$, the restriction of $x_p$ to $B(0,\vep)$ is not identically zero. We now may use the following distortion lemma from \cite{MA3}, Lemma 3.5. Before we formulate it, we say that a disk $D \in B(0,\vep)$ is called a $k$-Whitney disk if
\[
diam(D) \geq k \cdot dist(D,0).
\]
We let $\NN$ be a neighbourhood of $\La$ such that still $|(f^N)'(z)| \geq \la_0 > 1$, for some $\la_0 > 1$ for all $z \in \La$ and for all $a \in \BB(0,\vep)$.
\begin{Lem} \label{distlemma}
If $\vep > 0$ is sufficiently small, there exists a number $0 < k < 1$ only depending on the function $x_p$, and a number $S  > 0$ such that the following holds for any $k$-Whitney disk inside $B(0,\vep)$: There is an $n > 0$ such that the set $\xi_{n,p}(D) \subset \NN$  has diameter at least $S$ and
\[
\biggl| \frac{\xi_{k,p}'(a)}{\xi_{k,p}'(b)} - 1 \biggr| \leq \frac{1}{100}
\]
for all $a,b \in D$ and all $k \leq n$.
\end{Lem}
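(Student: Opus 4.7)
The plan is to combine three ingredients: a two-sided control on $|x_p|$ on any $k$-Whitney disk, coming from the fact that $x_p|_{B(0,\vep)}$ is analytic and not identically zero; the holomorphic-motion identity $f_a \circ h_a = h_a \circ f$, which lets one write the parameter orbit as an iterate of $f_a$ applied to the first-step displacement $x_p(a)$; and a Koebe-style telescoping of distortion driven by the uniform hyperbolicity of $\La$ on $\NN$.

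First, write $x_p(a) = a^m g(a)$ on $B(0,\vep)$, with $m \ge 1$ the vanishing order at $0$ and $g$ analytic and non-vanishing on a smaller disk. For a $k$-Whitney disk $D \subset B(0,\vep)$, every $a \in D$ satisfies $\dist(D,0) \le |a| \le (1+k^{-1})\operatorname{diam}(D)$, so both $|x_p(a)|$ and $\operatorname{diam}(x_p(D))$ are comparable to $(\operatorname{diam} D)^m$ with constants depending only on $k$, and the ratio $x_p'(a)/x_p'(b)$ is close to $1$ uniformly on $D$. Iterating the motion identity gives
\[
\xi_{n,p}(a) - h_a(f^n(c_p(0))) = f_a^{n-1}\bigl(\xi_{1,p}(a)\bigr) - f_a^{n-1}\bigl(h_a(f(c_p(0)))\bigr),
\]
so the displacement of the parameter orbit from $h_a(\La)$ is the $(n-1)$-fold $f_a$-iterate, started on a point of $h_a(\La)$, of the initial displacement $x_p(a)$. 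Fix $S>0$ small enough that the $S$-neighbourhood of $h_a(\La)$ lies in $\NN$ for every $a \in \BB(0,\vep)$, and define $n = n(D)$ to be the first integer with $\operatorname{diam} \xi_{n,p}(D) \ge S$. For $j < n$ the set $\xi_{j,p}(D)$ stays inside $\NN$, so the $N$-step uniform expansion by $\la_0$ applies, and the initial size $\operatorname{diam} \xi_{1,p}(D) = \operatorname{diam} x_p(D) \gtrsim (\operatorname{diam} D)^m > 0$ from the Whitney lower bound ensures $n$ is finite.

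The distortion estimate then follows from the chain-rule telescope
\[
\log \frac{\xi_{k,p}'(a)}{\xi_{k,p}'(b)} = \log \frac{x_p'(a)}{x_p'(b)} + \sum_{j=1}^{k-1} \log \frac{f_a'(\xi_{j,p}(a))}{f_a'(\xi_{j,p}(b))},
\]
estimated via the Lipschitz constant $L$ of $\log|f_a'|$ on compact subsets of $\NN$: each summand is at most $L \operatorname{diam} \xi_{j,p}(D)$. Grouping steps in blocks of $N$ and using minimality of $n$ together with $N$-step expansion by $\la_0$, one obtains $\operatorname{diam} \xi_{j,p}(D) \le C \la_0^{-(n-j)/N} S$, so the full sum is a backward geometric series bounded by $C' S$. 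Shrinking $S$ (and $\vep$ accordingly, to kill the $x_p'$-term) makes this bound $\le 1/100$ uniformly in $k \le n$.

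The main obstacle is the correct ordering of constants: $\vep$ must be chosen first so that the holomorphic motion and the $\NN$-invariance under $f_a$ hold; then $k$ is fixed (depending on the leading coefficient and vanishing order of $x_p$) to make the Whitney lower bound quantitative; and $S$ is shrunk last, to absorb both $L$ and the sum of the backward geometric series. The point of the Whitney hypothesis is precisely that it allows $k$ to be independent of how close $D$ sits to $0$, so that a single choice of $k$ works for all admissible disks simultaneously.
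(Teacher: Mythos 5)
The paper never proves this lemma internally: it is quoted verbatim from \cite{MA3}, Lemma 3.5, so the comparison here is with your reconstruction on its own terms, and that reconstruction has a genuine gap at its core. The chain-rule telescope
\[
\log \frac{\xi_{k,p}'(a)}{\xi_{k,p}'(b)} = \log \frac{x_p'(a)}{x_p'(b)} + \sum_{j=1}^{k-1} \log \frac{f_a'(\xi_{j,p}(a))}{f_a'(\xi_{j,p}(b))}
\]
is not an identity. Since $\xi_{k,p}(a)=f_a^{k-1}(\xi_{1,p}(a))$ is a parameter function, differentiating in $a$ yields, besides the product of phase derivatives, the extra terms
\[
\sum_{j=1}^{k-1} (f_a^{\,k-1-j})'(\xi_{j+1,p}(a))\,\partial_a f_a(\xi_{j,p}(a)),
\]
coming from the $a$-dependence of the map itself; moreover $\xi_{1,p}'(a)=x_p'(a)+\partial_a\bigl[h_a(f_0(c_p(0)))\bigr]\neq x_p'(a)$ (and in your quotient the $b$-orbit should be iterated under $f_b$, not $f_a$). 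Controlling these extra terms is precisely the transversality problem such lemmas are built around: one must compare the parameter derivative $\xi_{n,p}'(a)$ with the phase derivative $(f_a^{n-1})'(\xi_{1,p}(a))$ and show their ratio stays bounded away from $0$ and $\infty$ with small oscillation (this is what Levin's theorem is invoked for in the preceding lemma, and what the Benedicks--Carleson-type induction in \cite{MA3} accomplishes); only then does expansion on $\La$ translate into distortion in parameter space. Your geometric-decay bound $\operatorname{diam}\xi_{j,p}(D)\le C\la_0^{-(n-j)/N}S$ has the same defect: uniform expansion on $\NN$ controls the displacement $x_j(a)=\xi_{j,p}(a)-h_a(f_0^{\,j}(c_p(0)))$ of a single parameter's orbit from the moving hyperbolic set, but it does not by itself control the diameter of the parameter image $\xi_{j,p}(D)$ (note also $\operatorname{diam}\xi_{1,p}(D)\neq\operatorname{diam}x_p(D)$, again because of the motion term); that step needs the same parameter--phase comparison, or an induction showing $\operatorname{diam}\xi_{j,p}(D)$ is comparable to $\sup_D|x_j|$.

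A second problem concerns your starting estimate. With the paper's definition of a $k$-Whitney disk, namely $\operatorname{diam}(D)\ge k\cdot\dist(D,0)$ (a lower bound only), disks containing or nearly containing $a=0$ are admissible, and on such disks the claim that $x_p'(a)/x_p'(b)$ is uniformly close to $1$ fails whenever the vanishing order $m\ge 2$: for $x_p(a)=a^m$ and $D=B(0,r)$ the ratio is $(a/b)^{m-1}$, which is unbounded. So the first term of your telescope cannot be absorbed by shrinking $\vep$ or $S$; near the zero of $x_p$ the distortion of $\xi_{k,p}'$ must be extracted from the transversality statement, not from $x_p$ alone. Your use of the conjugacy $f_a\circ h_a=h_a\circ f_0$ to set up the shadowing, and your ordering of the constants $\vep$, $k$, $S$, are sensible, but as written the argument does not establish the derivative-ratio bound, which is the substance of the lemma.
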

Basically this lemma says that the distortion of $\xi_{n,p}$ on $k$-Whitney disks is very small up to the large scale (which is denoted by $S$ in the above lemma).

We may now conclude the proof partially inspired by \cite{MA3}. First, by Lemma 4.1, for any $r > 0$ there is some $\ti{N}$ such that any disk $D$ of radius $r > 0$ centered at the Julia set of $f$ has the property that $f^n(D)$ covers the whole Riemann sphere for some $n \leq \ti{N}$. We may choose $\vep$ such that this property persists for small perturbations, in a parameter disk $\BB(0,\vep)$.

Let us say that a semi-hyperbolic is $(\de,k)$-semi-hyperbolic if for every critical point $c \in J(f)$ we have that $B(c,\de) \cap P^k(f,c) = \emptyset$. Hence, $f \in S_{\de,k}$, defined in the beginning of this section.  It is also clear that we only need to prove that the set $S_{\de,k}$ has Lebesgue measure zero for every $\de > 0$ and $k \geq 1$ by taking the union
\[
S^d = \bigcup_{n,k=1}^{\infty} S_{1/n,k}.
\]

We now use induction over the number of critical points in the Julia set $J(f_0)$. To start the induction, we will assume that are $2$ critical points in the Julia set for $f_0$. Suppose that $f=f_0$ is a $\de$-semi-hyperbolic map.

Before we start the induction, we proceed as follows. Assume that the set of $\de$-semi-hyperbolic maps (of a fixed degree $d$) with at most $q-1$ critical points in their Julia set has Lebesgue measure zero.
Suppose that $f=f_0$ is a $\de$-semi-hyperbolic map for some $\de > 0$ and has $q$ critical points on the Julia set as before, with $c_p$ being a critical point whose forward orbit does not contain any other critical point.
Now let $D_0 = B(a_0,r_0)$ be a $k$-Whitney disk inside $B(0,\vep)$ where $x_p$ is not identically zero. By Lemma \ref{distlemma} there is some $n > 0$ such that $\xi_{n,p}(D_0)$ has diameter at least $S/2$. Put $D_1 = B(a_0,r_0/2)$ and $D_2 = B(a_0,r_0/4)$ and let $D' = \xi_{n,p}(D_1)$ and $D'' = \xi_{n,p}(D_2)$.

Let us now note that for any disk $D_z' \subset F(f_0)$ centered at $z$ of diameter $S' > 0$ there is a perturbation $\vep > 0$ such that $D_z'' \subset F(f_a)$ for all $a \in \BB(0,\vep)$ where $D_z''$ is a disk of diameter $S'/2$ centered at $z$. Moreover, by compactness we may  choose $\vep > 0$ such that this holds independently of $z$ as long as $D_z' \subset F(f_0)$.
We now have two cases.

{\bf Case 1.} If $D' \subset F(f_0)$, we have seen from above that there is some $\vep > 0$ such that $D'' \subset F(f_a)$ for all $a \in \BB((0,\vep)$. Hence the set of maps inside $D_2$ have at most $q-1$ critical point on their Julia set. By induction the set of $\de$-semi-hyperbolic maps inside $D_2$ has Lebesgue measure zero. Since $\mu(D_2) = (1/8)\mu(D_0)$ we have
\[
\mu(\{ a \in D_0 : f_a \in S_{\de} \}) \leq \mu(D_0 \sm D_2) = (7/8) \mu(D_0).
\]

{\bf Case 2.} If $D' \cap J(f_0) \neq \emptyset$ then we know that $\xi_n(D_0)$ contains a disk of radius at least $S/4$ centered at the Julia set of $f_0$. Hence $\xi_{n+m}(D_0)$ covers the whole Riemann sphere for some $m \leq \ti{N}$. Consequently $\xi_{n+m}(D_0)$ also covers $B(c_p,\de)$. By bounded distortion, there is a constant $0 < C < 1$ such that
\[
\mu(\xi_{n+m,p}^{-1}(B(c_p,\de)) \geq C \mu(D_0).
\]
For any parameter $a \in \xi_{n+m,p}^{-1}(B(c_p,\de)$, $f_a$ cannot be $(\de,n+m-1)$-semi-hyperbolic unless $\xi_{n+m,p}(a) \in B(c_p(0),\de/2)$ and $c_p(a)$ has become super-attracting. But parameters for which there is a super-attracting periodic orbit have Lebesgue measure zero, so we have
\[
\mu(\{ a \in D_0 : f_a \in S_{\de,n+m-1} \}) \leq (1-C)\mu(D_0).
\]

Hence in all cases we have that for each $k$-Whitney disk in $B(0,\vep)$ the Lebesgue measure of the set of $(\de,n+m-1)$-semi-hyperbolic maps is strictly less than $C \mu(D_0)$, where $0 < C < 1$.

We now start the induction and proceed in the same way. If there are only $2$ critical points $c_1$ and $c_2$ in the Julia set for $f=f_0$ then we assume that the forward orbit of $c_2$ does not accumulate on $c_1$ (and $c_2$ by definition) so that $\La = P(f,c_2)$ is a hyperbolic set as before. With the same notations as above, let $D_0 = B(a_0,r_0)$ be a $k$-Whitney disk inside $B(0,\vep)$ where $x_2$ is not identically zero. 

In Case 1 above we have that there is a set $D_2 \subset D_0$ where each $a \in D_2$ corresponds to a parameter for which there is at most one critical point in the Julia set. In this case semi-hyperbolic maps and Misiurewicz maps coincide. We know from \cite{MA3} that such maps has Lebesgue measure zero and thus
\[
\mu(\{ a \in D_0 : f_a \in S_{\de} \}) \leq  (7/8) \mu(D_0).
\]
In Case 2 above we may use exactly the same argument (in fact the induction is not used in this case) to conclude that 
\[
\mu(\{ a \in D_0 : f_a \in S_{\de,n+m-1} \}) \leq (1-C)\mu(D_0).
\]

These estimates hold for every $k$-Whitney disk $D_0 \subset B(0,\vep)$ around the starting map $f=f_0$, for which the number of critical points on its Julia set for the starting map $f=f_0$ can be at most $q$, given that the set of $\de$-semi-hyperbolic maps has Lebesgue measure zero if there are at most $q-1$ critical points on the Julia set.

We finish the proof by first noting that the sets $S_{\de,k}$ are nested so that $S_{\de,k} \subset S_{\de,k+1}$. Moreover, the number $k=m+n-1$ can be made arbitrarily large if $\vep$ is sufficiently small. Altogether, assuming that the set of $\de$-semi-hyperbolic maps with maximum $q-1$ critical points on the Julia set has Lebesgue measure zero, this means that the Lebesgue density of $(\de,k)$-semi-hyperbolic maps in $B(0,\vep)$ with maximum $q$ critical points on the Julia set has to be strictly smaller than $1$ for any $k \geq 1$ and $\de > 0$. Hence the set of $(\de,k)$-semi-hyperbolic maps (with maximum $q$ critical points on the Julia set) has Lebesgue measure zero for any $k \geq 1$ and $\de > 0$ and consequently the same holds for the set of $\de$-semi-hyperbolic maps (with maximum $q$ critical points on its Julia set). By induction over $q$ this completes the proof of Theorem \ref{semirare}.

\bibliographystyle{plain}
\bibliography{ref}

\end{document}